\newtheorem{theorem}{Theorem}[section]
\newtheorem{lemma}[theorem]{Lemma}
\theoremstyle{definition}
\newtheorem{remark}[theorem]{Remark}
\numberwithin{equation}{section}
\newcommand{\kl}{{\mathcal K}_\ell}
\newcommand{\kln}{{\mathcal K}_{\ell,N}}
\newcommand{\fkl}{\cF_\ell|_{{\mathcal K}_\ell}}
\newcommand{\fkln}{f_\ell|_{{\mathcal K}_{\ell,N}}}
\newcommand{\ws}{\widetilde\Sigma}
\newcommand{\cF}{\mathcal{F}}
\newcommand{\cM}{\mathcal{M}}
\newcommand{\cS}{\mathcal{S}}
\newcommand{\IR}{{\mathbb R}}
\begin{document}

\title{Topological entropy of Bunimovich stadium billiards}

\author[M.~Misiurewicz]{Micha{\l}~Misiurewicz}

\address[Micha{\l}~Misiurewicz]
{Department of Mathematical Sciences\\ Indiana Univer\-sity-Purdue
University Indianapolis\\ 402 N. Blackford Street\\
Indianapolis, IN 46202\\ USA}
\email{mmisiure@math.iupui.edu}

\author[H.-K.~Zhang]{Hong-Kun~Zhang}

\address[Hong-Kun~Zhang]
{Department of Mathematics and Statistics\\ University of
Massachusetts\\ Amherst, MA 01003\\ USA}
\email{hongkun@math.umass.edu}

\subjclass[2010]{Primary 37D50, 37B40}

\keywords{Bunimovich stadium billiard, topological entropy}

\date{January 28, 2020}

\thanks{Research of Micha{\l} Misiurewicz was partially
supported by grant number 426602 from the Simons Foundation.}

\begin{abstract}
We estimate from below the topological entropy of the Bunimovich
stadium billiards. We do it for long billiard tables, and find the
limit of estimates as the length goes to infinity.
\end{abstract}

\maketitle

\section{Introduction}

In this paper, we consider Bunimovich stadium billiards. This was
the first type of billiards having convex (focusing) components of the
boundary $\partial \Omega$, yet enjoying the hyperbolic behavior
\cite{Bun74,Bun79}. Such boundary consists of two semicircles at the
ends, joined by segments of straight lines (see Figure~\ref{fig0}).
For those billiards, ergodicity, K-mixing and Bernoulli
property were proved in \cite{CT} for the natural measure.

\begin{figure}[h]
\begin{center}
\includegraphics[width=100truemm]{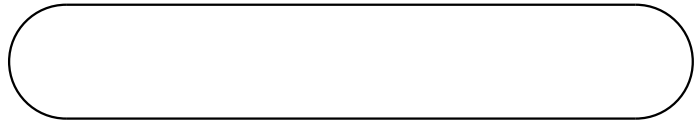}
\caption{Bunimovich stadium.}\label{fig0}
\end{center}
\end{figure}

We consider billiard maps (not the flow) for two-dimensional billiard
tables. Thus, the phase space of a billiard is the product of the
boundary of the billiard table and the interval $[-\pi/2,\pi/2]$ of
angles of reflection. We will use the variables $(r,\varphi)$, where $r$
parametrizes the table boundary by the arc length, and $\varphi$ is the
angle of reflection. We mentioned the natural measure; it is
$c\cos\varphi\;dr\;d\varphi$, where $c$ is the normalizing constant. This
measure is invariant for the billiard map.

As we said, we want to study topological entropy of the billiard map.
This means that we should look at the billiard as a topological
dynamical system. However, existence of the natural measure resulted
in most authors looking at the billiard as a measure preserving
transformation. That is, all important properties of the billiard were
proved only almost everywhere, not everywhere. Additionally, the
billiard map is only piecewise continuous instead of continuous. Often
it is even not defined everywhere. All this creates problems already
at the level of definitions. We will discuss those problems in the
next section.

In view of this complicated situation, we will not try to produce a
comprehensive theory of the Bunimovich stadium billiards from the
topological point of view, but present the results on their
topological entropy that are independent of the approach. For this we
will find a subspace of the phase space that is compact and invariant,
and on which the billiard map is continuous. We will find the
topological entropy restricted to this subspace. This entropy is a
lower bound of the topological entropy of the full system, no matter
how this entropy is defined. Finally, we will find the limit of our
estimates as the length of the billiard table goes to infinity.

The reader who wants to learn more on other properties of the
Bunimovich stadium billiards, can find it in many papers, in
particular \cite{BC, BK, Bun74, Bun79, Ch, CM, HC}. While some of them
contain results about topological entropy of those billiards, none of
those results can be considered completely rigorous.

The paper is organized as follows. In Section~\ref{sec-teb} we discuss
the problems connected with defining topological entropy for
billiards. In Section~\ref{sec-sac} we produce symbolic systems
connected with the Bunimovich billiards. In Section~\ref{sec-cote} we
perform actual computations of the topological entropy.

\section{Topological entropy of billiards}\label{sec-teb}

Let $\cM=\partial\Omega\times[-\pi/2,\pi/2]$ be the phase space of a billiard
and let $\cF:\cM\to\cM$ be the billiard map. We assume that the
boundary of the billiard table is piecewise $C^2$ with finite number
of pieces. In such a situation the map $\cF$ is piecewise continuous
(in fact, piecewise smooth) with finitely many pieces. That is, $\cM$
is the union of finitely many open sets $\cM_i$ (of quite regular
shape) and a singular set $\cS$, which is the union of finitely many
smooth curves, and on which the map is often even not defined. The map
$\cF$ restricted to each $\cM_i$ is a diffeomorphism onto its image.

This situation is very similar as for piecewise continuous piecewise
monotone interval maps. For those maps, the usual way of investigating
them from the topological point of view is to use \emph{coding}. We
produce the \emph{symbolic system} associated with our map by taking
sequences of symbols (numbers enumerating pieces of continuity)
according to the number of the piece to which the $n$-th image of our
point belongs. On this symbolic space we have the shift to the left.
In particular, the topological entropy of this symbolic system was
shown to be equal to the usual Bowen's entropy of the underlying
interval map (see~\cite{MZ}).

Thus, it is a natural idea to do the same for billiards. Thus, for a
point $x\in\cM$, whose trajectory is disjoint from $\cS$, we take its
\emph{itinerary} (code) $\omega(x)=(\omega_n)$, where $\omega_n=i$ if
and only if
$\cF(x)\in\cM_i$. The problem is that the set of itineraries obtained
in such a way is usually not closed (in the product topology). Therefore
we have to take the closure of this set. Then the question one has to
deal with is whether there is no essential dynamics (for example,
invariant measures with positive entropy) on this extra set. A
rigorous approach for coding, including the definition of topological
entropy and a proof of a theorem analogous to the one from~\cite{MZ},
can be found in the recent paper of Baladi and Demers~\cite{BD} about
Sinai billiards.

The Sinai billiard maps are simpler for
coding than the Bunimovich stadium maps. There are finitely many
obstacles on the torus, so the pieces of the boundary, used for the
coding, are pairwise disjoint. This property is
not shared by the Bunimovich stadium billiards. The stadium billiard
is hyperbolic, but not uniformly. Moreover, here we have to
deal with the trajectories that are bouncing between the straight line
segments of the boundary. To complete the list of problems, the coding
with four pieces of the boundary seems to be not sufficient (as has
been noticed in~\cite{BK}).

The papers dealing with the topological entropy of Bunimovich stadium
billiards use different definitions. In~\cite{BK} and~\cite{HC},
topological entropy is explicitly definied as the exponential growth
rate of the number of
periodic orbits of a given period. In~\cite{Ch}, first coding is
performed in a different way, using rectangles defines by stable and
unstable manifolds. This coding uses an infinite alphabet. Then
various definitions of topological entropy for the obtained symbolic
system are used. In~\cite{BD}, topological entropy is defined as the
topological entropy of the corresponding symbolic system, that is, as
the exponential growth rate of the number of nonempty cylinders of a
given length in the symbolic system. As we
mentioned, it is shown that the result is the same as when one is
using the classical Bowen's definition for the original billiard map.
In~\cite{BC}, topological entropy is not formally defined, but it
seems that the authors think of the entropy of the symbolic system.

In this paper, we will be considering a subsystem of the full billiard
map. This subsystem is a continuous map of a compact space to itself,
and is conjugate to a subshift of finite type. Thus, whether we define
the topological entropy of the full system as the entropy of the
symbolic system or as the growth rate of the number of periodic orbit,
our estimates will be always lower bounds for the topological entropy.

\section{Subsystem and coding}\label{sec-sac}

We consider the Bunimovich stadium billiard table, with the radius of
the semicircles 1, and the lengths of straight segments $\ell>1$. The
phase space of this billiard map will be denoted by $\cM_\ell$, and
the map by $\cF_\ell$. The subspace of $\cM_\ell$ consisting of points
whose trajectories have no two consecutive collisions with the same
semicircle will be denoted by $\kl$. The subspace of $\kl$ consisting
of points whose trajectories have no $N+1$ consecutive collisions with
the straight segments will be denoted by $\kln$. We will show that if
$\ell>2N+2$, then the map $\cF_\ell$ restricted to $\kln$ has very
good properties.

In general, coding for $\cF_\ell$ needs at least six symbols. They
correspond to the four pieces of the boundary of the stadium, and
additionally on the semicircles we have to specify the orientation of
the trajectory
(whether $\varphi$ is positive or negative), see~\cite{BK}. However, in
$\kl$ this additional requirement is unnecessary, because there are no
multiple consecutive collisions with the same semicircle. This also
implies that in $\kl$ for a given $\ell$ the angle $\varphi$ is uniformly
bounded away from $\pm\pi/2$.

While in~\cite{BC} the statements about
generating partition are written in terms of measure preserving
transformations, the sets of measure zero that have to be removed are
specified. In $\kl$ the only set that needs to be removed is the set
of points whose trajectories are periodic of period 2, bouncing from
the two straight line segments. However, this set carries no
topological entropy, so we can ignore it. Thus, according
to~\cite{BC}, the symbolic system corresponding to $\cF_\ell$ on $\kl$
is a closed subshift $\Sigma_\ell$ of a subshift of finite type with 4
symbols. We say that there is a \emph{transition} from a state $i$ to
$j$ if it is possible that $\omega_n=i$ and $\omega_{n+1}=j$. In our
subshift here are some transitions that are forbidden: one cannot go
from a symbol corresponding to a semicircle to the same symbol. There
are of course also some transitions in many steps forbidden; they
depend on $\ell$.

For every element of $\Sigma_\ell$ there is a unique point of $\kl$
with that itinerary. However, the same point of $\kl$ may have more than one
itinerary, because there are four points on the boundary of the stadium
that belong to two pieces of the boundary each. Thus, the coding is
not one-to-one, but this is unavoidable if we want to obtain a compact
symbolic system. Another solution would be to remove codes of all
trajectories that pass through any of four special points, and at the
end take the closure of the symbolic space.

This problem disappears when we pass to $\kln$ with $\ell>2N+2$.
Namely, then the angle $\varphi$ at any point of $\kln$ whose first
coordinate is on the straight line piece, is larger than $\pi/4$ in
absolute value.

Let us look at the geometry of this situation. Let $C$ be the right
unit semicircle in $\IR^2$ (without endpoints), $A\in C$, and let
$L_1,L_2$ be half-lines emerging from $A$, reflecting from $C$ (like a
billiard flow trajectory) from inside at $A$ (see Figure~\ref{fig3}).
Assume that for $i=1,2$ the angles between $L_i$ and the horizontal
lines are less than $\pi/4$, and that $L_i$ intersects $C$ only at
$A$. Consider the argument $\arg(A)$ of $A$ (as in polar coordinates
on in the complex plane).

\begin{figure}[h]
\begin{center}
\includegraphics[width=40truemm]{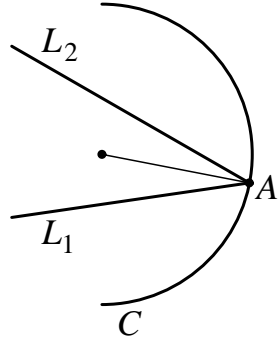}
\caption{Situation from Lemma~\ref{geom}.}\label{fig3}
\end{center}
\end{figure}

\begin{lemma}\label{geom}
In the above situation, $|\arg(A)|<\pi/4$. Moreover, neither $L_1$ nor
$L_2$ passes through an endpoint of $C$.
\end{lemma}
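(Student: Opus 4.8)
The plan is to place the center of $C$ at the origin, write $A=(\cos\alpha,\sin\alpha)$ with $\alpha=\arg(A)\in(-\pi/2,\pi/2)$, and encode the reflection law through a single incidence angle. Since $L_1,L_2$ reflect at $A$ like a billiard trajectory, they are symmetric about the radius (normal) $OA$; writing $\psi\in(0,\pi/2)$ for the common angle each makes with that radius, the inward directions of $L_1,L_2$ have arguments $\alpha+\pi-\psi$ and $\alpha+\pi+\psi$. The endpoints of $C$ are $P_\pm=(0,\pm1)$, with arguments $\pm\pi/2$. I will phrase both assertions as constraints on the pair $(\alpha,\psi)$ and derive contradictions.

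For the first assertion I would argue as follows. Because the reflection is from inside and each $L_i$ meets $C$ only at $A$, the chord carrying $L_i$ must cross the closed vertical diameter $\{0\}\times[-1,1]$, so that its second intersection with the full circle lies on the closed left semicircle; hence the direction of $L_i$ lies in the closed angular sector bounded by the directions $A\to P_+$ and $A\to P_-$. Since $A$ has positive abscissa while $P_\pm$ lie on the axis, both bounding directions point strictly to the left, so the whole sector is contained in the half-plane of directions $(\pi/2,3\pi/2)$ around the leftward direction $\pi$. The hypothesis that $L_i$ makes an angle $<\pi/4$ with the horizontal places its direction within $\pi/4$ of $0$ or of $\pi$; intersecting with the leftward sector rules out the neighborhood of $0$ and forces each direction into $(3\pi/4,5\pi/4)$. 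Translating to $(\alpha,\psi)$ gives $\alpha-\psi\in(-\pi/4,\pi/4)$ and $\alpha+\psi\in(-\pi/4,\pi/4)$, whence $\psi-\pi/4<\alpha<\pi/4-\psi$ and in particular $|\arg(A)|=|\alpha|<\pi/4-\psi<\pi/4$.

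For the second assertion I would compute the second intersection of each line with the full circle. A chord through $A$ making angle $\psi$ with the radius subtends a central angle $\pi-2\psi$ (the triangle with $O$ is isosceles), so $L_1,L_2$ meet the circle again at points $B_1,B_2$ with arguments $\alpha+\pi-2\psi$ and $\alpha-\pi+2\psi$. Passing through an endpoint means $B_i\in\{P_+,P_-\}$. Checking the four possibilities against $\psi\in(0,\pi/2)$ leaves only $B_1=P_+$ (forcing $\psi=\alpha/2+\pi/4$) and $B_2=P_-$ (forcing $\psi=\pi/4-\alpha/2$). Substituting either relation into the two strict inequalities $\alpha\pm\psi\in(-\pi/4,\pi/4)$ from the first part yields simultaneously $\alpha<0$ and $\alpha>0$, a contradiction; hence no endpoint is hit.

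The main obstacle is the endpoint statement, and specifically the fact that the condition ``$L_i$ meets $C$ only at $A$'' does \emph{not} by itself exclude the boundary chords $A\to P_\pm$: those chords touch the open arc $C$ only at $A$, so on purely geometric grounds they are admissible. The exclusion must instead be extracted from the reflection law, which couples the two rays: if one ray is aimed exactly at an endpoint, the isosceles/central-angle computation pins down $\psi$, and that value makes the partner ray too steep (angle $\ge\pi/4$ with the horizontal), violating the hypothesis. Getting the bookkeeping of arguments and orientations right --- so that ``within $\pi/4$ of horizontal'' becomes the clean interval condition $\alpha\pm\psi\in(-\pi/4,\pi/4)$ and the endpoint conditions become linear relations in $\alpha,\psi$ --- is where I would be most careful.
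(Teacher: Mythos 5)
Your proof is correct, and the key computations check out: the second intersections of the chords with the full circle are at arguments $\alpha+\pi-2\psi$ and $\alpha-\pi+2\psi$, the cases $B_1=P_-$ and $B_2=P_+$ are impossible for $\psi\in(0,\pi/2)$, and each of the two remaining endpoint relations forces $\alpha<0$ and $\alpha>0$ simultaneously. Your argument rests on the same geometric fact as the paper's --- the reflection law makes $L_1,L_2$ symmetric about the radius $OA$ --- but your route to the endpoint claim is different. The paper argues with the intersection hypothesis: if one ray passes through, say, the lower endpoint, shallowness forces $\arg(A)<0$, and then the mirror image of that endpoint across the line $OA$, the point of argument $2\arg(A)+\pi/2$, lies on the open semicircle $C$ and is the second intersection of the \emph{partner} ray with the circle, contradicting the hypothesis that it meets $C$ only at $A$. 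You instead package shallowness and single intersection for both rays into the constraints $\alpha\pm\psi\in(-\pi/4,\pi/4)$ and contradict those; your part one is correspondingly more explicit than the paper's terse ``both lines on the same side of the origin'' argument about where the single-intersection hypothesis enters. Two small remarks. Your closing gloss --- that aiming one ray at an endpoint makes the partner ray ``too steep'' --- is not literally true in all configurations: for $\arg(A)\in(-\pi/2,-\pi/3)$, a ray aimed at the lower endpoint has a shallow partner that points rightward and re-enters $C$, so it is the intersection hypothesis (the paper's culprit) that fails; this does not damage your formal deduction, since your interval constraints encode both hypotheses jointly. Also, taking $\psi\in(0,\pi/2)$ silently excludes the perpendicular reflection $\psi=0$ (where $L_1=L_2$ is the inward normal), a case that does occur for stadium orbits; it is trivial --- the normal makes angle $|\alpha|$ with the horizontal and meets the circle again at the antipode of $A$ --- but should be noted.
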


\begin{proof}
If $|\arg(A)|\ge\pi/4$, then both lines $L_1$ and $L_2$ are on the
same side of the origin, so the incidence and reflection angle cannot
be the same. Therefore, $|\arg(A)|<\pi/4$.

Suppose that $L_1$ passes through the lower endpoint of $C$ (the other
cases are similar). Then $\arg(A)<0$, so $L_2$ intersects the
semicircle also at the point with argument
\[
\arg(A)+(\arg(A)-(-\pi/2))=2\arg(A)+\pi/2,
\]
a contradiction.
\end{proof}

In view of the above lemma, the collision points on the semicircles
cannot be too close to the endpoints of the semicircles (including
endpoints themselves). Thus, the
correspondence between $\kln$ and its coding system $\Sigma_{\ell,N}$
is a bijection. Standard considerations of topologies in both systems
show that this bijection is a homeomorphism, say
$\xi_{\ell,N}:\kln\to\Sigma_{\ell,N}$. If $\sigma$ is the left shift
in the symbolic system, then by the construction we have
$\xi_{\ell,N}\circ\cF_\ell=\sigma\circ\xi_{\ell,N}$. In such a way we
get the following lemma.

\begin{lemma}\label{conj}
If $\ell>2N+2$ then the systems $(\kln,\cF_\ell)$ and
$(\Sigma_{\ell,N},\sigma)$ are topologically conjugate.
\end{lemma}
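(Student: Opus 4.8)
The plan is to show that the itinerary map $\xi_{\ell,N}\colon\kln\to\Sigma_{\ell,N}$ is a homeomorphism intertwining $\cF_\ell$ with the shift $\sigma$. The intertwining relation $\xi_{\ell,N}\circ\cF_\ell=\sigma\circ\xi_{\ell,N}$ is immediate from the construction of the coding, since advancing a trajectory by one collision corresponds to deleting the first symbol of its itinerary. Thus the substance of the proof is that $\xi_{\ell,N}$ is a well-defined bijection and that it is a homeomorphism.

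First I would verify bijectivity. The only way a point of $\kl$ can fail to have a unique itinerary is for its trajectory to meet one of the four junction points where a semicircle and a straight segment share a boundary point, since such a point lies in two pieces of the partition at once. For $\ell>2N+2$, Lemma~\ref{geom} guarantees that every collision of a $\kln$-trajectory with a semicircle occurs at a point bounded away from the endpoints of that semicircle, so no $\kln$-trajectory ever reaches a junction point. Hence $\xi_{\ell,N}$ is single-valued; it is injective because each code in $\Sigma_\ell\supseteq\Sigma_{\ell,N}$ determines a unique point of $\kl$, and it is surjective because $\Sigma_{\ell,N}$ is by definition the set of itineraries arising from $\kln$.

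For the topological statements I would first note that $\Sigma_{\ell,N}$ is a closed subshift of the full shift on four symbols, hence compact, and that $\kln$ is a closed, therefore compact, subset of the compact phase space $\cM_\ell$; the defining conditions pass to limits precisely because Lemma~\ref{geom} keeps the limiting trajectory away from tangencies and junction points. Continuity of $\xi_{\ell,N}$ then follows from the uniform separation of $\kln$-collisions from the junction points together with the uniform bound keeping $\varphi$ away from $\pm\pi/2$ recorded earlier in this section: along $\kln$-trajectories the map $\cF_\ell$ and any finite number of its iterates are continuous and cross the partition boundaries transversally, so every point sufficiently close to $x$ agrees with $x$ on an arbitrarily long itinerary prefix. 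Since a continuous bijection from a compact space onto a Hausdorff space is automatically a homeomorphism, $\xi_{\ell,N}$ is a homeomorphism, and the conjugacy follows.

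The step I expect to be the main obstacle is the compactness of $\kln$, i.e.\ showing that a limit of $\kln$-trajectories cannot escape $\kln$ by developing a grazing (tangential) collision or an extra consecutive bounce; equivalently, that the estimates allowing an itinerary to pin down its point are uniform over $\kln$. Both rely squarely on the uniform separation from the singular set furnished by Lemma~\ref{geom}, which is why the passage from $\kl$ to $\kln$ with $\ell>2N+2$ is essential.
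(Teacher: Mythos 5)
Your proposal is correct and takes essentially the same route as the paper: the paper justifies this lemma by the discussion preceding it, using Lemma~\ref{geom} to keep every $\kln$-collision on a semicircle away from the junction points (so the coding is a genuine bijection), asserting the equivariance $\xi_{\ell,N}\circ\cF_\ell=\sigma\circ\xi_{\ell,N}$ by construction, and dismissing the rest as ``standard considerations of topologies in both systems.'' What you have written simply fills in those standard considerations --- compactness of $\kln$ and of $\Sigma_{\ell,N}$ via the uniform separation from the singular set, continuity of the itinerary map, and the continuous-bijection-from-compact-to-Hausdorff argument --- which is exactly what the paper leaves implicit.
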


We can modify our codings, in order to simplify further proofs. The
first thing is to identify the symbols corresponding to two
semicircles. This can be done due to the symmetry, and will result in
producing symbolic systems $\Sigma'_\ell$ and $\Sigma'_{\ell,N}$,
which are 2-to-1 factors of $\Sigma_\ell$ and $\Sigma_{\ell,N}$
respectively. Since the operation of taking a 2-to-1 factor preserves
topological entropy, this will not affect our results.

With this simplification, $\Sigma'_\ell$ is a closed, shift-invariant
subset of the phase space of a subshift of finite type $\ws$. Subshift
of finite type $\ws$ looks as follows. There are three states, $0,A,B$
(where 0 corresponds to the semicircles), and the only forbidden
transitions are from $A$ to $A$ and from $B$ to $B$.

Then $\Sigma'_{\ell,N}$ is a closed, shift-invariant subset of
$\Sigma'_\ell$, where additionally $n$-step transitions involving only
states $A$ and $B$ are forbidden if $n>N$. However, it pays to recode
$\Sigma'_{\ell,N}$. Namely, we replace states $A$ and $B$ by
$1,2,\dots,N$ and $-1,-2,\dots,-N$ respectively. If
$(\omega_n)\in\Sigma'_{\ell,N}$, and $\omega_k=\omega_{k+m+1}=0$, while
$\omega_n\in\{A,B\}$ for $n=k+1,k+2,\dots,k+m$, then for the recoded
sequence $(\rho_n)$ we have $\rho_k=\rho_{k+m+1}=0$ and
$(\rho_{k+1},\rho_{k+2}\dots,\rho_{k+m})$ is equal to $(1,2,\dots,m)$
if $\omega_{k+1}=A$ and $(-1,-2,\dots,-m)$ if $\omega_{k+1}=B$.

Geometric meaning of the recoding is simple. We unfold the
stadium by using reflections from the straight parts (see
Figure~\ref{fig1}). We will label the levels of the semicircles by
integers.
\begin{figure}[h]
\begin{center}
\includegraphics[width=100truemm]{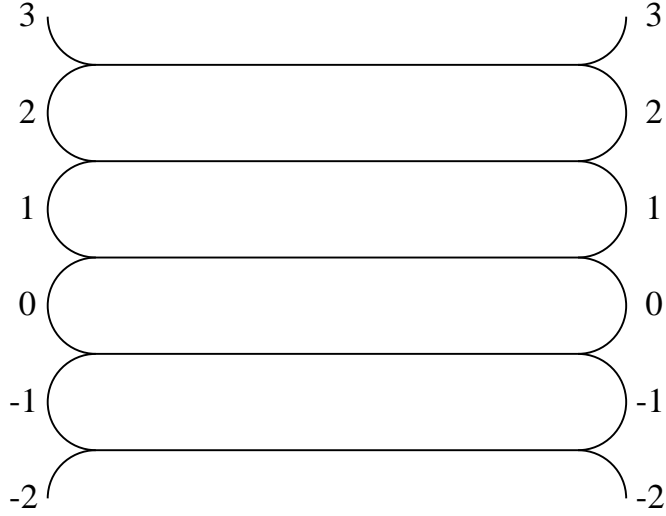}
\caption{Unfolded stadium.}\label{fig1}
\end{center}
\end{figure}
Our new coding translates to this picture as follows. We start at a
semicircle, then go to a semicircle on the other side and $m$ levels
up or down, etc.

For symbolic systems, recoding in such a way amounts to the
topological conjugacy of the original and recoded systems (see~\cite{K}).
This means that the system $(\Sigma'_{\ell,N},\sigma)$ is
topologically conjugate to a subsystem of $\ws_N$, which is the
subshift of finite type defined as follows. The states are
$-N,-N+1,\dots,N-1,N$, and the transitions are: from 0 to every state,
from $i$ to $i+1$ and 0 if $1\le i\le N-1$, from $N$ to 0, from $-i$
to $-i-1$ and 0 if $1\le i\le N-1$, and from $-N$ to 0.

\begin{lemma}\label{level-c}
If $\ell>2N+2$ then $(\Sigma'_{\ell,N},\sigma)$ is topologically
conjugate to $(\ws_N,\sigma)$.
\end{lemma}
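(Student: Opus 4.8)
The plan is to establish the conjugacy between $(\Sigma'_{\ell,N},\sigma)$ and $(\ws_N,\sigma)$ by showing that the recoding map described just before the lemma is a well-defined bijection that intertwines the shift, and then invoke the standard fact (cited to~\cite{K}) that such sliding-block recodings are topological conjugacies. Concretely, the recoding rule sends a block of consecutive $A$'s or $B$'s of length $m$ (sandwiched between symbols $0$) to the block $(1,2,\dots,m)$ or $(-1,-2,\dots,-m)$. First I would verify that this map $\Sigma'_{\ell,N}\to\ws_N$ is injective: the original symbol (whether the run was of $A$'s or $B$'s) is recovered from the sign of the recoded entries, and the position within the run is recovered from the absolute value, so no information is lost. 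Second I would verify that the image lands in $\ws_N$ by checking that every allowed transition of $\Sigma'_{\ell,N}$ maps to an allowed transition of $\ws_N$: a run of $A$'s of length $m$ produces the path $0\to 1\to 2\to\cdots\to m\to 0$, which uses exactly the transitions from $0$ to every state, from $i$ to $i+1$ for $1\le i\le N-1$, and from the top of the run back to $0$. The constraint $m\le N$ from the definition of $\Sigma'_{\ell,N}$ is precisely what guarantees the run never needs a transition out of $N$ other than $N\to 0$, so the image respects the finite-type constraints of $\ws_N$.

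The next step is surjectivity onto $\ws_N$, and this is where I expect the only real subtlety to lie. Any admissible sequence in $\ws_N$, by the transition rules, must consist of maximal runs of the form $0,1,2,\dots,m,0$ or $0,-1,-2,\dots,-m,0$ with $1\le m\le N$, since once one leaves $0$ one is forced to climb $1\to 2\to\cdots$ (or descend) until returning to $0$, and one cannot skip levels or reverse direction. Decoding such a sequence back to $\{0,A,B\}$ by replacing positive runs with $A$-runs and negative runs with $B$-runs produces a sequence in $\ws$ with the correct forbidden-transition structure (no $A$ followed by $A$ at the junction, because every maximal run is flanked by $0$'s), and with all $A,B$-only transitions of length at most $N$. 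Hence the decoded sequence lies in $\Sigma'_{\ell,N}$, giving surjectivity. One must be a little careful at the level of bi-infinite sequences that have no $0$ at all, but a sequence of $\ws_N$ consisting only of positive (or only negative) states is impossible since the transitions force an eventual return to $0$; thus every point genuinely decomposes into finite runs, and the correspondence is clean.

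Finally I would check that the recoding commutes with $\sigma$ and is a homeomorphism. Commuting with the shift is immediate once one observes that the recoding is determined locally: the value $\rho_n$ depends only on how far $n$ sits from the nearest preceding $0$ together with the sign of the current run, all of which is read off from a bounded window of $(\omega_j)$, so shifting the input shifts the output. Continuity of both the map and its inverse then follows because the product topology makes any such finite-window (sliding-block) recoding automatically continuous, and we have already produced the inverse explicitly. With injectivity, surjectivity, continuity in both directions, and shift-equivariance in hand, the map is a topological conjugacy, which combined with Lemma~\ref{conj} completes the chain. The main obstacle is purely bookkeeping around the run structure and the role of the length bound $N$; there is no hard analysis, so I would keep the write-up short and lean on~\cite{K} for the general principle that such recodings are conjugacies.
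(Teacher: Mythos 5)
Your recoding bookkeeping is fine as far as it goes, but it proves the wrong statement: it shows that the sliding-block recoding is a conjugacy between $\ws_N$ and the set of \emph{all} $\{0,A,B\}$-sequences obeying the one-step transition rules together with the run-length bound $N$. However, $\Sigma'_{\ell,N}$ is not defined that way: it is the set of itineraries of \emph{actual billiard trajectories} in $\kln$, and the paper is explicit (in the paragraph introducing $\Sigma_\ell$) that besides the one-step rules there are further multi-step forbidden transitions which depend on $\ell$. Consequently, after recoding, $\Sigma'_{\ell,N}$ is a priori only a closed, shift-invariant \emph{subset} of $\ws_N$, and the entire content of the lemma is that this inclusion is an equality when $\ell>2N+2$. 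Your surjectivity step --- ``the decoded sequence satisfies the transition rules and the run-length bound, hence it lies in $\Sigma'_{\ell,N}$'' --- assumes exactly the point that must be proved; nothing in a purely symbolic argument can tell you which admissible sequences are realized by the billiard.

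What is missing is a geometric realization argument: for every finite admissible block of $\ws_N$ one must exhibit a point of $\kln$ whose coded-and-recoded itinerary contains that block; since both sets are closed, density of $\Sigma'_{\ell,N}$ in $\ws_N$ then yields equality. The paper does this by unfolding the stadium, considering the compact class of all polygonal curves that visit the prescribed semicircles at the prescribed levels, taking the \emph{longest} such curve, and showing by an ellipse-tangency argument that this maximizer is a genuine billiard trajectory. The hypothesis $\ell>2N+2$ is used three times there: to bound the maximal curvature of the relevant ellipses by $\frac{\ell}{\ell^2-(2N+1)^2}<1$ (so tangency to the semicircles is from outside and the maximizer reflects correctly), to keep reflection points away from the semicircle endpoints, and to guarantee the segments meet the semicircles at angles below $\pi/4$ so no segment crosses a semicircle twice. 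Your proposal contains no substitute for this construction, so it has a genuine gap precisely at its central step.
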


\begin{proof}
Both sets $\Sigma'_{\ell,N}$ and $\ws_N$ are closed and
$\Sigma'_{\ell,N}\subset\ws_N$. Therefore, it is enough to prove that
$\Sigma'_{\ell,N}$ is dense in $\ws_N$. For this we show that for
every sequence $(\rho_0,\rho_1,\dots,\rho_k)$ appearing as a block in
an element of $\ws_N$ there is a point $(r_0,\varphi_0)\in\kln$ for which
after coding and recoding a piece of trajectory of length $k+1$, we
get $(\rho_0,\rho_1,\dots,\rho_k)$. By taking a longer sequence, we
may assume that $\rho_0=\rho_k=0$.

Consider all candidates for such trajectories in the unfolded stadium,
when we do not care whether the incidence and reflection angles are
equal. That is, we consider all curves that are unions of straight
line segments from $x_0$ to $x_1$ to $x_2$ $\dots$ to $x_k$ in the
unfolded stadium, such that $x_0$ is in the left semicircle at level
0, $x_1$ is in the right semicircle at level $n_1$, $x_2$ is in the
left semicircle at level $n_1+n_2$, etc. Here $n_1,n_2,\dots$ are the
numbers of non-zero elements of the sequence
$(\rho_0,\rho_1,\dots,\rho_k)$ between a zero element and the next
zero element, where we also take into account the signs of those
non-zero elements. In other words, this curve is an approximate
trajectory (of the flow) in the unfolded stadium that would have the
recoded itinerary $(\rho_0,\rho_1,\dots,\rho_k)$. Additionally we require that
$x_0$ and $x_k$ are at the midpoints of their semicircles. The class
of such curves is a compact space with the natural topology, so there
is the longest curve in this class. We claim that this curve is a
piece of the flow trajectory corresponding to the trajectory we are
looking for.

If we look at the ellipse with foci at $x_i$ and $x_{i+2}$ to which
$x_{i+1}$ belongs, then $x_{i+1}$ has to be a point of tangency of
that elipse and the semicircle. Since for the ellipse the angles of
incidence and reflection are equal, the same is true for the
semicircle.

Now we have to prove three properties of our curve. The first one is that
any small movement of one of the points $x_1,\dots,x_{m-1}$ gives us a
shorter curve. The second one is that none of those points lies at an
endpoint of a semicircle. The third one is that none of the segments
of the curve intersects any semicircle at any other point.

The first property follows from the fact that any ellipse with foci on
the union of the left semicircles at levels $-N$ through $N$, which is
tangent to any right semicircle, is tangent from outside. This is
equivalent to the fact that the maximal curvature of such ellipse is
smaller than the curvature of the semicircles (which is 1). The
distance between the foci of our ellipse is not larger than $2(2N+1)$,
and the length of the large semi-axis is larger than $\ell$.
Elementary computations show that the maximal curvature of such
ellipse is smaller than $\frac{\ell}{\ell^2-(2N+1)^2}$. Thus, this
property is satisfied if $\ell^2-\ell>(2N+1)^2$. However, by the
assumption, $\ell^2-\ell=\ell(\ell-1)\ge(2N+2)(2N+1)>(2N+1)^2$.

The second property is clearly satisfied, because if $x_i$ lies at an
endpoint of a semicircle, then an infinitesimally small movement of
this point along the semicircle would result in both straight segments
of the curve that end/begin at $x_i$ to get longer.

The third property follows from the observation that if $\ell\ge 2N+2$
then the angles between the segments of our curve and the straight
parts of the billiard table boundary are smaller than $\pi/4$. Suppose
that the segment from $x_i$ to $x_{i+1}$ intersects the semicircle $C$
to which $x_{i+1}$ belongs at some other point $y$ (see
Figure~\ref{fig2}). Then $x_{i+1}$ and $y$ belong to the same half of
$C$. By the argument with the ellipses, at $x_{i+1}$ the incidence and
reflection angles of our curve are equal. Therefore, the segment from
$x_{i+1}$ to $x_{i+2}$ also intersects $C$ at some other point, so
$x_{i+1}$ should belong to the other half of $C$, a contradiction.
This completes the proof.
\end{proof}

\begin{figure}[h]
\begin{center}
\includegraphics[width=40truemm]{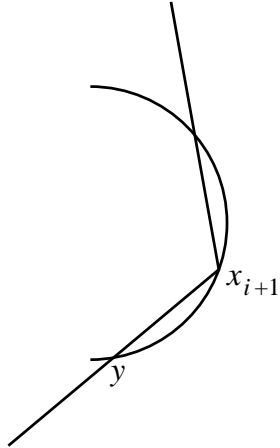}
\caption{Two intersections.}\label{fig2}
\end{center}
\end{figure}

\begin{remark}\label{Markov}
By Lemmas~\ref{conj} and~\ref{level-c} (plus the way we obtained
$\Sigma'_{\ell,N}$ from $\Sigma_{\ell,N}$) it follows that if
$\ell>2N+2$ then the natural partition of $\kln$ into four sets is a
Markov partition.
\end{remark}

\section{Computation of topological entropy}\label{sec-cote}

In the preceding section we obtained some subshifts of finite type. Now
we have to compute their topological entropies. If the alphabet
of a subshift of finite type is $\{1,2,\dots,k\}$, then we
can write the \emph{transition matrix} $M=(m_{ij})_{i,j=1}^n$, where
$m_{ij}=1$ if there is a transition from $i$ to $j$ and $m_{ij}=0$
otherwise. Then the topological entropy of our subshift is the
logarithm of the spectral radius of $M$ (see~\cite{K, ALM}).

\begin{lemma}\label{ent-kl}
Topological entropy of the system $(\Sigma_\ell',\sigma)$ is
$\log(1+\sqrt2)$.
\end{lemma}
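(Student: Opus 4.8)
The plan is to compute the topological entropy of $(\Sigma'_\ell,\sigma)$ as the logarithm of the spectral radius of the transition matrix of the ambient subshift of finite type $\ws$, described at the end of Section~\ref{sec-sac}. The key point is that $\Sigma'_\ell$ sits inside $\ws$, and in fact I expect it to be dense in $\ws$: the restriction $\ell>1$ (rather than $\ell>2N+2$) imposes no finite bound on the number of consecutive collisions with the straight segments, so every admissible block of $\ws$ should be realizable by an actual trajectory, exactly as in the proof of Lemma~\ref{level-c} but without the curvature obstruction that forced the length restriction there. Thus I would first argue that $\Sigma'_\ell$ is dense in $\ws$, hence (both being closed and shift-invariant) that $\Sigma'_\ell=\ws$, so their entropies coincide and it suffices to analyze $\ws$.

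Next I would write down the transition matrix of $\ws$. Recall $\ws$ has three states $0,A,B$, with the only forbidden transitions being $A\to A$ and $B\to B$. Ordering the states as $(0,A,B)$, the transition matrix is
\begin{equation}
M=\begin{pmatrix} 1 & 1 & 1\\ 1 & 0 & 1\\ 1 & 1 & 0\end{pmatrix}.
\end{equation}
By the cited results (\cite{K,ALM}), the topological entropy is the logarithm of the spectral radius of $M$.

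Then I would compute the spectral radius. The characteristic polynomial is $\det(M-\lambda I)$, a cubic in $\lambda$, and the dominant root is the Perron eigenvalue since $M$ is nonnegative (and irreducible). Exploiting the symmetry between $A$ and $B$, the eigenvector for the leading eigenvalue should have equal $A$- and $B$-components, which reduces the eigenvalue problem to a $2\times 2$ system in the variables (weight at $0$, common weight at $A$ and $B$); this yields a quadratic whose relevant root I expect to be $1+\sqrt2$. A quick consistency check: the symmetric reduction gives the system
\begin{align}
\lambda u &= u + 2v,\\
\lambda v &= u + v,
\end{align}
where $u$ is the $0$-component and $v$ the common $A$/$B$-component; eliminating gives $(\lambda-1)^2=2$, so $\lambda=1+\sqrt2$, and the entropy is $\log(1+\sqrt2)$.

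I do not anticipate a serious obstacle in the linear algebra, which is entirely routine. The one place requiring care is the density claim $\Sigma'_\ell=\ws$: I must verify that every finite admissible word of $\ws$ is actually realized by a genuine billiard trajectory in $\kl$ for $\ell>1$. This can be handled by the same longest-curve variational argument used in Lemma~\ref{level-c}, together with the observation that the two geometric obstructions there (the curvature inequality and the angle bound) are only needed to control long runs of straight-segment collisions; for a word of bounded length one can always choose the realizing trajectory to have sufficiently large angles, or invoke the fact that $\Sigma'_\ell$ already contains all words with runs of length at most $N$ for every $N$ with $\ell>2N+2$ — but since we are now fixing $\ell$ and letting the words range freely, the cleanest route is to reprove density directly. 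This is the step I would write out most carefully.
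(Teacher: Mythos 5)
Your linear algebra is correct and matches the paper's conclusion: with the paper's definition of $\ws$ (only $A\to A$ and $B\to B$ forbidden), the transition matrix is the one you wrote, the symmetric reduction is valid, and the Perron root is $1+\sqrt2$. (Curiously, the paper's own proof writes a different matrix, forbidding $A\to B$ and $B\to A$ instead, which corresponds to labelling a straight-segment bounce by the direction of its run rather than by the segment hit; its characteristic polynomial $(1-x)(x^2-2x-1)$ has the same largest root, so the discrepancy is harmless.) The genuine gap is your first step: the claim that $\Sigma'_\ell$ is dense in, hence equal to, $\ws$ for a fixed $\ell>1$. That claim is false, and the paper itself warns of this in Section~\ref{sec-sac}, where it notes that beyond the one-step rules ``some transitions in many steps [are] forbidden; they depend on $\ell$.'' Concretely, take the $\ws$-word consisting of $0$, then a run of $m$ straight-segment symbols with $m\gg\ell$, then $0\,0$. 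In the unfolded stadium, a trajectory realizing the run must climb roughly $2m$ in height across a horizontal span of at most $\ell+2$, hence is nearly vertical; a nearly vertical ray entering a half-disk through its diameter exits through the circle near its junction with the straight segment, where the normal is nearly vertical, so the reflected ray is again nearly vertical, and (since hitting the same semicircle twice in a row is excluded in $\kl$) it must cross on the order of $m$ further levels before reaching the other semicircle. Thus a run of length $m\gg\ell$ is necessarily followed by another run of length at least roughly $m/3$, never by $0\,0$: that word lies in $\ws$ but not in the language of $\Sigma'_\ell$. This also shows your proposed repair cannot work: density requires realizing \emph{all} finite words, whose runs can be arbitrarily long relative to the fixed $\ell$, and the hypothesis $\ell>2N+2$ in Lemma~\ref{level-c} (the curvature condition) is a genuine geometric restriction, not an artifact of that proof.

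What your argument does yield, with no density needed, is the upper bound $h(\Sigma'_\ell,\sigma)\le h(\ws,\sigma)=\log(1+\sqrt2)$, by monotonicity of entropy under inclusion of closed shift-invariant sets. To be fair, the paper's own one-line proof does no more on this point: it simply declares the full $3\times3$ matrix to be ``the transition matrix of $(\Sigma'_\ell,\sigma)$,'' which presumes exactly the identification you tried to prove, while the lower bound $h(\Sigma'_\ell,\sigma)\ge\log(1+\sqrt2)$ at fixed $\ell$ is established nowhere: the subsystems $\kln$ only give $h\ge\log y_N$, where $y_N$ is the largest root of equation~\eqref{eq0} and $N<(\ell-2)/2$, which is strictly smaller. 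Note also that only the upper bound is used later: Theorem~\ref{main1} combines it with the $\liminf$ bound of Theorem~\ref{main}, obtained by letting $\ell\to\infty$. So keep your matrix computation, replace the density claim by the inclusion $\Sigma'_\ell\subset\ws$, and you have a correct proof of the inequality the paper actually needs --- but not of the stated equality, which would require a new argument controlling words containing runs longer than $\ell$.
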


\begin{proof}
The transition matrix of $(\Sigma_\ell',\sigma)$ is
\[
\begin{bmatrix}
  1&1&1\\
  1&1&0\\
  1&0&1\\
\end{bmatrix}.
\]
The characteristic polynomial of this matrix is $(1-x)(x^2-2x-1)$, so
the entropy is $\log(1+\sqrt2)$.
\end{proof}

In the case of larger, but not too complicated, matrices, in order to
compute the spectral radius one can use the \emph{rome method}
(see~\cite{BGMY, ALM}). For the transition matrices of $\ws_N$ this
method is especially simple. Namely, if we look at the paths given by
transitions, we see that 0 is a rome: all paths lead to it. Then we
only have to identify the lengths of all paths from 0 to 0 that do not
go through 0 except at the beginning and the end. The spectral radius
of the transition matrix is then the largest zero of the function
$\sum x^{-p_i}-1$, where the sum is over all such paths and $p_i$ is
the length if the $i$-th path.

\begin{lemma}\label{ent-kln}
Topological entropy of the system $(\ws_N,\sigma)$ is the logarithm of
the largest root of the equation
\begin{equation}\label{eq0}
(x^2-2x-1)=-2x^{1-N}.
\end{equation}
\end{lemma}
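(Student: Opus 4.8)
The plan is to run the rome method just set up for $\ws_N$. As observed there, $0$ is a rome and the spectral radius of the transition matrix is the largest zero of $\sum_i x^{-p_i}-1$, where the $p_i$ are the lengths of the paths from $0$ to $0$ that return to $0$ only at their endpoints. So the whole task is to list these simple loops with their lengths and then sum the resulting series; the asserted entropy is then the logarithm of the largest root, since the topological entropy of a subshift of finite type is the logarithm of the spectral radius of its transition matrix.

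First I would read the loops off the transition structure. The transition $0\to 0$ gives a single loop of length $1$. Every other simple loop leaves $0$ into one of the two arms of the graph---entering it at the innermost state, since in the recoding each straight-segment run begins at level $1$---and can then only travel outward before dropping back to $0$: from a state $i$ with $1\le i\le N-1$ the admissible moves are to $i+1$ and to $0$, and from the outermost state only to $0$. Hence on the positive arm the loop reaching level $k$ is the unique chain $0\to 1\to 2\to\cdots\to k\to 0$, and the negative arm supplies a mirror loop of equal length. So, apart from the length-$1$ loop, the simple loops occur in equal-length pairs, one per arm, whose lengths increase by one as $k$ grows and fill a block $2,3,\dots,M$, where $M$ is set by the longest ascending chain allowed in $\ws_N$.

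Substituting into the rome formula gives an equation of the form
\[
x^{-1}+2\sum_{j=2}^{M}x^{-j}=1 ,
\]
a finite geometric progression. Summing it, then multiplying through by an appropriate power of $x$ and by $x-1$, collapses the middle sum and leaves a two-term relation; this is the route I expect to \eqref{eq0}, after which taking the logarithm of its largest root finishes the proof.

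The summation is routine, so the step I would watch most carefully---and the place where an off-by-one is easiest to commit---is the bookkeeping that fixes the top index $M$, that is, matching the longest chain in $\ws_N$ to the constraint defining $\kln$ (no $N+1$ consecutive collisions with the straight segments); this is what pins down the exponent of the correction term in \eqref{eq0}. I would also reconfirm that $0$ is genuinely a rome: the only edges joining nonzero states strictly increase $|i|$, so no cycle can close without revisiting $0$, and via the rome method this also guarantees that the largest real root extracted is indeed the spectral radius.
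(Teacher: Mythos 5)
Your proposal is methodologically identical to the paper's proof: the paper applies the rome method exactly as you set it up, lists the simple loops through the rome $0$, sums the geometric progression, and multiplies by $x(1-x)$ to collapse it. Concretely, the paper counts one loop of length $1$ and two loops of each length $2,3,\dots,N$, obtains the rome function $2(x^{-N}+\dots+x^{-3}+x^{-2})+x^{-1}-1$, and collapses it to $(x^2-2x-1)+2x^{1-N}$, which is \eqref{eq0}.

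However, the index you deliberately left open---your $M$---is exactly where the two computations part ways, and your instinct to distrust this bookkeeping was sound. With the arms as you describe them (and as the paper defines $\ws_N$: states $1,\dots,N$ and $-1,\dots,-N$, entered at $\pm1$ because every recoded run starts at level $1$), a run of $m\le N$ straight-segment collisions (the defining constraint of $\kln$) recodes to the chain $1,\dots,m$ and closes the loop $0\to1\to\dots\to m\to0$ of length $m+1$; the longest such loop therefore has length $N+1$. So the faithful count is $M=N+1$, and collapsing
\[
x^{-1}+2\sum_{j=2}^{N+1}x^{-j}-1
\]
yields
\[
x^2-2x-1=-2x^{-N},
\]
not \eqref{eq0}: the paper's loop list (lengths $2,\dots,N$ only) is off by one against its own definitions of $\kln$ and $\ws_N$. (Relatedly, the clause ``from $0$ to every state'' in the definition of $\ws_N$ cannot be meant literally; it would create $2N$ loops of length $2$ and entropy growing like $\tfrac12\log N$, contradicting Lemmas~\ref{level-c} and~\ref{ent-kl}. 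Transitions from $0$ must go to $0,\pm1$ only, as you assumed.) The discrepancy is harmless downstream, since the largest roots of both equations converge to $1+\sqrt2$ as $N\to\infty$, so Theorems~\ref{t-ent-kln}, \ref{main} and~\ref{main1} stand either way; but a faithful completion of your argument lands on the exponent $-N$, not $1-N$, and you should say so rather than force agreement with \eqref{eq0}.
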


\begin{proof}
The paths that we mentioned before the lemma, are: one path of length
1 (from 0 directly to itself), and two paths of length $2,3,\dots,N$
each. Therefore, our entropy is the logarithm of the largest zero of
the function $2(x^{-N}+\dots+x^{-3}+x^{-2})+x^{-1}-1$. We have
\[
x(1-x)\big(2(x^{-N}+\dots +x^{-3}+x^{-2})+x^{-1}-1\big)=
(x^2-2x-1)+2x^{1-N},
\]
so our entropy is the logarithm of the largest root of
equation~\eqref{eq0}.
\end{proof}

Now that we computed topological entropies of the subshifts of
finite type involved, we have to go back to the definition of the
topological entropy of billiards (and their subsystems). As we
mentioned earlier, the most popular definitions either employ the
symbolic systems or use the growth rate of the number of periodic
orbits of the given period. For subshifts of finite type that does not
make difference, because the exponential growth rate of the number of
periodic orbits of a given period is the same as the topological
entropy (if the systems are topologically mixing, which is the case
here). As the first step, we get the following result, that follows
immediately from Lemmas~\ref{conj}, \ref{level-c} and~\ref{ent-kln}.

\begin{theorem}\label{t-ent-kln}
If $\ell>2N+2$ then topological entropy of the system
$(\kln,\cF_\ell)$ is the logarithm of the largest root of the
equation~\eqref{eq0}.
\end{theorem}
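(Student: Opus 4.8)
The plan is to chain together the three lemmas the paper has already established, since Theorem~\ref{t-ent-kln} is essentially a bookkeeping conclusion once the conjugacies and the entropy computation are in place. The logical skeleton is short: Lemma~\ref{conj} gives a topological conjugacy between $(\kln,\cF_\ell)$ and $(\Sigma_{\ell,N},\sigma)$; the passage to the $2$-to-$1$ factor and the recoding (carried out in the discussion preceding Lemma~\ref{level-c}) together with Lemma~\ref{level-c} identify $(\Sigma_{\ell,N},\sigma)$, up to entropy, with $(\ws_N,\sigma)$; and Lemma~\ref{ent-kln} computes the topological entropy of $(\ws_N,\sigma)$ as the logarithm of the largest root of equation~\eqref{eq0}. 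Since all of these rest on the hypothesis $\ell>2N+2$, that same hypothesis carries through to the conclusion.

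First I would invoke Lemma~\ref{conj} to replace the billiard subsystem by the symbolic system $(\Sigma_{\ell,N},\sigma)$, noting that topological conjugacy preserves topological entropy. Next I would account for the two intermediate symbolic reductions. Taking the $2$-to-$1$ factor that identifies the two semicircles produces $(\Sigma'_{\ell,N},\sigma)$; as the paper remarks, a $2$-to-$1$ factor preserves topological entropy, so no entropy is lost here. The subsequent recoding of the states $A,B$ into $\{1,\dots,N\}$ and $\{-1,\dots,-N\}$ is, as noted, a topological conjugacy, hence again entropy-preserving. Applying Lemma~\ref{level-c} then gives that $(\Sigma'_{\ell,N},\sigma)$ is topologically conjugate to the subshift of finite type $(\ws_N,\sigma)$. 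Stringing these equalities of entropy together yields
\[
h_{\mathrm{top}}(\kln,\cF_\ell)=h_{\mathrm{top}}(\ws_N,\sigma).
\]
Finally, Lemma~\ref{ent-kln} evaluates the right-hand side as the logarithm of the largest root of~\eqref{eq0}, which is precisely the claimed value.

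The one point requiring slight care, rather than a genuine obstacle, is the bookkeeping of which maps preserve entropy and why the factor map does not destroy it. Topological conjugacies trivially preserve entropy, but the $2$-to-$1$ factor map is only a factor, so I would lean on the stated fact (justified by symmetry of the stadium) that a finite-to-one factor between these symbolic systems preserves topological entropy; in the subshift setting this is standard, since a finite-to-one factor map between subshifts cannot increase entropy and, being surjective, cannot decrease it. I do not expect any new estimates or geometric arguments to be needed here: all the hard analysis—the ellipse-curvature comparison and the argument that the extremal unfolded curve is a genuine billiard trajectory—was already consumed in proving Lemma~\ref{level-c}. Thus the proof is simply the observation that the chain of (entropy-preserving) equivalences collapses the billiard subsystem onto $\ws_N$, whose entropy was computed in Lemma~\ref{ent-kln}.
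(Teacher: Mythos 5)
Your proposal is correct and takes essentially the same route as the paper, which obtains Theorem~\ref{t-ent-kln} exactly by chaining Lemma~\ref{conj}, the entropy-preserving $2$-to-$1$ factor and recoding steps from Section~\ref{sec-sac}, Lemma~\ref{level-c}, and Lemma~\ref{ent-kln}. One small caveat: your justification that the $2$-to-$1$ factor map cannot decrease entropy ``being surjective'' has the logic backwards --- surjectivity (i.e.\ being a factor) only gives that the factor's entropy is at most that of the original, and it is the finiteness of the fibers (e.g.\ via Bowen's fiber-entropy inequality) that yields the reverse inequality; the paper simply asserts this standard fact, so your conclusion stands.
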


Now, independently of which definition of the entropy $h(\fkl)$ of
$(\kl,\cF_\ell)$ we choose, we get the next theorem.

\begin{theorem}\label{main}
We have
\begin{equation}\label{eq1}
\liminf_{\ell\to\infty}h(\fkl)\ge 1+\sqrt2.
\end{equation}
\end{theorem}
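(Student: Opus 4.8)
The plan is to use the compact invariant subsystems $\kln$ as inner approximations to $\kl$ and then let both $N$ and $\ell$ grow. The starting observation is that whenever $\ell>2N+2$, the set $\kln$ is a compact, $\cF_\ell$-invariant subset of $\kl$ on which $\cF_\ell$ is continuous, so $(\kln,\cF_\ell)$ is a genuine subsystem of $(\kl,\cF_\ell)$. For any of the definitions of entropy discussed in Section~\ref{sec-teb}, passing to such a subsystem cannot increase the entropy: in the symbolic picture the recoded system for $\kln$ is a shift-invariant subset of the one for $\kl$, hence has no more cylinders of each length; in the periodic-orbit picture every periodic orbit lying in $\kln$ is in particular a periodic orbit in $\kl$. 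Thus $h(\fkl)\ge h(\fkln)$ for every admissible $N$, and by Theorem~\ref{t-ent-kln} the right-hand side equals $\log\lambda_N$, where $\lambda_N$ is the largest root of~\eqref{eq0}.

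Next I would let $\ell\to\infty$ while choosing $N=N(\ell)$ as large as the constraint allows, say the largest integer with $2N+2<\ell$; then $N(\ell)\to\infty$. Since $h(\fkl)\ge\log\lambda_{N(\ell)}$ for each $\ell$, this gives
\[
\liminf_{\ell\to\infty}h(\fkl)\;\ge\;\liminf_{\ell\to\infty}\log\lambda_{N(\ell)}\;=\;\lim_{N\to\infty}\log\lambda_N,
\]
so the entire statement reduces to determining the limit of the root $\lambda_N$ of~\eqref{eq0} as $N\to\infty$.

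The remaining step is elementary root-tracking. Writing~\eqref{eq0} as $x^2-2x-1=-2x^{1-N}$, one notes that for $x>1$ the quadratic on the left is increasing, vanishing at $1+\sqrt2$, while the right-hand side is negative and rises to $0$ as $N$ grows. Comparing the two sides shows $\lambda_N<1+\sqrt2$ for every $N$ and that $(\lambda_N)$ is increasing in $N$; since $\lambda_N\ge\lambda_2=2$ for $N\ge2$, the correction term $2\lambda_N^{1-N}$ is squeezed to $0$, forcing $\lambda_N\uparrow1+\sqrt2$. Hence $\log\lambda_N\to\log(1+\sqrt2)$, the logarithm of the largest root of $x^2-2x-1$, which is exactly the entropy of the ambient subshift $\ws$ from Lemma~\ref{ent-kl}, and the claimed bound follows.

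The part requiring the most care is not this analytic limit but the first, definition-independent step: one must be sure that for \emph{each} candidate notion of topological entropy the inequality $h(\fkln)\le h(\fkl)$ genuinely holds, since $\kl$ itself need be neither compact nor a domain of continuity for $\cF_\ell$. This is precisely where the constructions of Section~\ref{sec-sac} pay off, because identifying $(\kln,\cF_\ell)$ with an honest subshift of finite type sitting inside the symbolic system of $\kl$ makes the monotonicity transparent for the cylinder-growth and the periodic-orbit definitions at once.
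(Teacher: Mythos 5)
Your proposal is correct and follows essentially the same route as the paper: the inclusion $\kln\subset\kl$ gives $h(\fkl)\ge h(\fkln)$ for any of the candidate definitions of entropy, Theorem~\ref{t-ent-kln} identifies $h(\fkln)$ with $\log\lambda_N$, and the largest roots of~\eqref{eq0} are shown to converge to $1+\sqrt2$. The only differences are cosmetic: you make the choice $N(\ell)$ explicit and track the roots by a monotone squeeze, where the paper fixes $N$, takes $\liminf_{\ell\to\infty}$ first, and notes that the right-hand side of~\eqref{eq0} tends uniformly to $0$ near $1+\sqrt2$.
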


\begin{proof}
On one hand, $\kln$ is a subset of $\kl$, so $h(\fkl)\ge h(\fkln)$ for
every $N$. Therefore, by Theorem~\ref{t-ent-kln},
\[
\liminf_{\ell\to\infty}h(\fkl)\ge\lim_{N\to\infty}\log y_N,
\]
where $y_N$ is the largest root of the equation~\eqref{eq0}. The
largest root of $x^2-2x-1=0$ is $1+\sqrt2$. In its neighborhood the
right-hand side of~\eqref{eq0} goes uniformly to 0 as $N\to\infty$.
Thus, $\lim_{N\to\infty}y_N=1+\sqrt2$, so we get~\eqref{eq1}.
\end{proof}

If we choose the definition of the entropy via the entropy of the
corresponding symbolic system, then, taking into account
Lemma~\ref{ent-kl}, we get a stronger theorem.

\begin{theorem}\label{main1}
We have
\begin{equation}\label{eq2}
\lim_{\ell\to\infty}h(\fkl)=1+\sqrt2.
\end{equation}
\end{theorem}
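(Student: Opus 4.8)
The plan is to obtain \eqref{eq2} as a two-sided estimate in which the lower bound is Theorem~\ref{main} and the matching upper bound is an immediate consequence of Lemma~\ref{ent-kl}, once we commit to the symbolic definition of entropy. First I would record what that definition gives here: $h(\fkl)$ is, by definition, the topological entropy of the symbolic system $\Sigma_\ell$ attached to $(\kl,\cF_\ell)$. Since $\Sigma_\ell'$ was produced as a $2$-to-$1$ factor of $\Sigma_\ell$, and taking such a factor preserves topological entropy, we have $h(\fkl)=h(\Sigma_\ell')$ for every admissible $\ell$. This reduces the whole statement to understanding $h(\Sigma_\ell')$, which is exactly what the earlier lemmas compute.

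For the upper bound I would simply quote Lemma~\ref{ent-kl}: it evaluates $h(\Sigma_\ell')=\log(1+\sqrt2)$, a value that does not depend on $\ell$. Consequently $h(\fkl)=\log(1+\sqrt2)$ for every $\ell$, so in particular $\limsup_{\ell\to\infty}h(\fkl)\le\log(1+\sqrt2)$. For the lower bound I would invoke Theorem~\ref{main}, whose proof exhausts $\kl$ by the compact invariant subsystems $\kln$, applies Theorem~\ref{t-ent-kln}, and lets $N\to\infty$ to obtain $\liminf_{\ell\to\infty}h(\fkl)\ge\log(1+\sqrt2)$. That argument is valid for any notion of entropy under which a subsystem carries no more entropy than the whole system, hence a fortiori for the symbolic definition used here. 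Combining the two inequalities collapses the $\liminf$ and $\limsup$ to a common value and yields \eqref{eq2}; in fact the upper bound shows that under this definition $h(\fkl)$ is already constant in $\ell$, so the limit is attained termwise.

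I do not expect a genuine analytic obstacle, since all the geometry has already been consumed in Lemmas~\ref{conj}, \ref{level-c}, and~\ref{ent-kln}; the only point deserving care is logical, namely the dependence on the choice of definition. The upper bound is precisely the step that fails for definitions other than the symbolic one: on the non-compact space $\kl$ (for example, through the growth rate of periodic orbits) one has no a priori cap matching $\log(1+\sqrt2)$, which is exactly why Theorem~\ref{main} could only assert a $\liminf$. I would therefore frame the proof so as to make explicit that it is Lemma~\ref{ent-kl}, furnishing an exact $\ell$-independent value for the symbolic entropy, that upgrades the inequality of Theorem~\ref{main} to the equality of limits in \eqref{eq2}.
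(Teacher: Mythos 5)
Your proposal is correct and coincides with the paper's own (implicit) argument: the paper obtains Theorem~\ref{main1} exactly by adopting the symbolic definition of $h(\fkl)$, using Lemma~\ref{ent-kl} (together with the entropy-preserving passage to the 2-to-1 factor $\Sigma'_\ell$) as the $\ell$-independent upper bound, and Theorem~\ref{main} as the lower bound. Your closing remark that $h(\fkl)$ is then constant in $\ell$ simply takes the equality in Lemma~\ref{ent-kl} at face value, which is consistent with how the paper uses that lemma.
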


Of course, the same lower estimates hold for the whole billiard.

\end{document}